\title[Gromov ellipticity and subellipticity]{Gromov ellipticity and subellipticity} 
\author{Shulim Kaliman and Mikhail Zaidenberg}
\address{University of Miami, Department of Mathematics, Coral Gables, FL
33124, USA}
\email{kaliman@math.miami.edu}
\address{Univ. Grenoble Alpes, CNRS, IF, 38000 Grenoble, France}
\email{mikhail.zaidenberg@univ-grenoble-alpes.fr}
\date{06.01.2023}
\newtheorem{thm}{Theorem}[section]
\newtheorem*{thm*}{Theorem}
\newtheorem*{conj*}{Conjecture}
\newtheorem{prop}[thm]{Proposition}
\newtheorem{lem}[thm]{Lemma}
\newtheorem{cor}[thm]{Corollary}
\newtheorem*{cor*}{Corollary}
\theoremstyle{definition}
\newtheorem*{exa*}{Example}
\theoremstyle{remark}
\newtheorem*{rem*}{Remark}
\theoremstyle{remark}
\newtheorem*{rems*}{Remarks}
\newcommand{\NN}{{\mathbb N}}
\newcommand{\kk}{{\mathbb K}}
\newcommand{\GG}{{\mathbb G}}
\renewcommand{\AA}{{\mathbb A}}
\newcommand{\OOO}{\mathcal O}
\newcommand{\ee}{\end{enumerate}}
\DeclareMathOperator{\Pic}{Pic}
\DeclareMathOperator{\GL}{GL}
\renewcommand{\subset}{\subseteq}
\renewcommand{\phi}{\varphi}
\begin{document}
{\small
\begin{abstract}
We establish the equivalence of Gromov ellipticity and subellipticity in the algebraic category. 
\end{abstract}
}
\maketitle
{\small
\tableofcontents}
\bigskip
\section*{Introduction}
\subsection{Ellipticity versus subellipticity}
We are working over an algebraically closed field $\kk$ of characteristic zero; 
and $\AA^n=\AA^n_\kk$ stands for the
affine $n$-space over $\kk$. All varieties and vector bundles in this note are algebraic; a variety is always reduced and irreducible. 
Abusing the language, in this paper the terms \emph{spray} and \emph{(sub)ellipticity} 
stand for algebraic spray resp. algebraic (sub)ellipticity. 

The notion of ellipticity was introduced, both in analytic and algebraic setting, in \cite[Sect. 0.5 and 3.5.A]{Gro89}. 
In the Localization Lemma 
(see \cite[Sect. 3.5.B]{Gro89}) Gromov actually considers a local version of ellipticity. 
The subellipticity (implicitly present in \cite[Sec. 3.5]{Gro89}) was introduced in
\cite{For02}; see \cite[Sect. 5.1]{For17} and \cite{For20} for a historical account. Recall these notions. 

A \emph{spray of rank $r$} over a smooth algebraic variety $X$ is a triple $(E,p,s)$ consisting of a vector bundle 
$p\colon E\to X$ of rank $r$ and a morphism $s\colon E\to X$ such that $s|_Z=p|_Z$ where $Z\subset E$ stands 
for the zero section of $p$. This spray is \emph{dominating at $x\in X$} if the restriction $s|_{E_x}\colon E_x\to X$ 
to the fiber $E_x=p^{-1}(x)$ is dominant at the origin  $0_x= Z\cap E_x$
of the vector space $E_x$. 
The variety $X$ is called \emph{elliptic} if it admits a spray $(E,p,s)$ which is dominating at each point $x\in X$.

A \emph{local spray} $(E,p,s)$ with values in $X$ at a point $x\in X$ consists of a  vector bundle 
$p\colon E\to U$ over a neighborhood $U$ of $x$ and a morphism $s\colon E\to X$ such that 
$s|_{Z_U}=p|_{Z_U}$ where $Z_U$ stands for the zero section of $p\colon E\to U$.
One says that $X$ is \emph{locally elliptic} if for any $x\in X$ there is a local spray in 
a neighborhood of $x$ dominating at $x$. The variety $X$ is called 
\emph{subelliptic} if it
admits a family of sprays $(E_i,p_i,s_i)$ defined over the whole $X$ which is dominating at each point $x\in X$, that is,
\[T_xX=\sum_{i=1}^n {\rm d}s_i (T_{0_{i,x}} E_{i,x}) \quad \forall x\in X.\]
Thus, an elliptic variety is both locally elliptic and subelliptic. 
In this note we establish the converse implications.
\begin{thm}\label{mthm}
For a smooth algebraic variety $X$ the following are equivalent:
\begin{itemize} 
\item $X$ is elliptic;
\item $X$ is subelliptic;
\item $X$ is locally elliptic.
\end{itemize}
\end{thm}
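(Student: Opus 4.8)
The plan is to prove the two substantive implications, \emph{subelliptic $\Rightarrow$ elliptic} and \emph{locally elliptic $\Rightarrow$ elliptic}; the implications \emph{elliptic $\Rightarrow$ subelliptic} and \emph{elliptic $\Rightarrow$ locally elliptic} are immediate, since a single dominating spray is a one-term dominating family and restricts to a dominating local spray near any point. So the task is to produce, from a finite dominating family of sprays over $X$ (resp.\ from dominating local sprays covering $X$), one spray $(E,p,s)$ with $E\to X$ a vector bundle that is dominating at every point.

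First I would dispose of the case of an affine $X$, where two elementary moves are available. (i) Any spray $(E,p,s)$ over an affine $X$ can be replaced by a spray on a \emph{trivial} bundle: pick $E'$ with $E\oplus E'\cong X\times\AA^N$ and set $s'(e,e'):=s(e)$; this is a spray whose differential along the zero section has the same image as that of $s$, hence it is dominating wherever $s$ is. (ii) If $s$ lives on a trivial bundle $X_f\times\AA^r$ over a principal open $X_f\subset X$, then for $N\gg0$ the substitution $v\mapsto f^{N}v$ kills the poles of $s$ along $\{f=0\}$ and extends $s$ to a global spray on $X\times\AA^r$, still dominating over $X_f$. Using (i), the composed (iterated) spray of a dominating family — now all on trivial bundles — again has trivial total space, and since the differential of a composed spray at the zero section is the sum of the differentials of its constituents, it is dominating everywhere; this proves \emph{subelliptic $\Rightarrow$ elliptic} for affine $X$. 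Using (i) and (ii), dominating local sprays over principal opens $X_{f_j}$ covering $X$ become global sprays forming a dominating family, and the previous sentence then yields \emph{locally elliptic $\Rightarrow$ elliptic} for affine $X$.

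To reach a general smooth $X$ I would pass to a Jouanolou-type affine bundle $\pi\colon\widetilde X\to X$, that is, a torsor under an algebraic vector bundle whose total space $\widetilde X$ is affine; such $\pi$ exists for $X$ quasi-projective (more generally divisorial), and I expect the general case to reduce to this. The plan is to transfer each of the three properties across $\pi$. In the direction from $X$ to $\widetilde X$ one pulls the sprays back along $\pi$ and lifts the pulled-back morphisms through $\pi$; the obstruction to such a lift is a class in $H^1$ of a vector bundle over an affine variety, hence vanishes. Combined with the affine case this gives \emph{$X$ locally elliptic $\Rightarrow$ $\widetilde X$ elliptic} and \emph{$X$ subelliptic $\Rightarrow$ $\widetilde X$ elliptic}, so it remains to descend ellipticity from $\widetilde X$ back to $X$.

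This descent is the step I expect to be the main obstacle. Composing a dominating spray on $\widetilde X$ with $\pi$ produces dominating maps to $X$, but their total spaces are affine bundles over $X$, not vector bundles, and there is no cheap remedy because $\pi$ has no global section — the total space of a vector bundle over a non-affine variety is never affine, so there is no zero section onto which to restrict. The plan for this point is to straighten these affine bundles into vector bundles without destroying dominance along the base section, exploiting the torsor structure of $\pi$ together with the observation that the self-fibre-product $\widetilde X\times_X\widetilde X\to\widetilde X$ has the diagonal as a section and is therefore a genuine vector bundle. This is exactly the passage that has no holomorphic counterpart, which is why the analogous statement over $\CC$ in the analytic category remains open.
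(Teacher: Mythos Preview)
Your affine case is correct and overlaps with ingredients the paper also uses: the extension trick $v\mapsto f^N v$ is exactly the paper's Extension Lemma, and the fact that sprays on trivial bundles compose to a spray on a trivial bundle is recorded there as well. The genuine gap is your descent from the Jouanolou torsor $\widetilde X$ back to $X$. Composing a dominating spray on $\widetilde X$ with $\pi$ yields an $\AA^N$-fibration over $X$ with no section, and your observation that $\widetilde X\times_X\widetilde X\to\widetilde X$ is a vector bundle only produces an object over $\widetilde X$, not over $X$; you give no construction turning this into a vector bundle over $X$ with a section along which the spray map agrees with the projection. You flag this yourself as the main obstacle, and it stays unresolved: nothing in the proposal explains how to ``straighten'' a torsor for a vector bundle over a non-affine base into an honest vector bundle while preserving dominance.

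The paper avoids any such descent by a direct argument on $X$. First it reduces to a dominating family of \emph{rank~$1$} sprays: locally split each $E_i$ into line subbundles, restrict $s_i$ to each summand, and extend these local rank~$1$ sprays to all of $X$ by the same extension trick. The key point is then that composing with a rank~$1$ spray preserves the vector-bundle property on any smooth $X$: in $E_1*E_2=s_1^*(E_2)$ with $E_2$ a line bundle, one uses that $p_1^*\colon\Pic(X)\to\Pic(E_1)$ is an isomorphism (homotopy invariance of $\Pic$ for vector bundles), so $s_1^*(E_2)\cong p_1^*(L)$ for some line bundle $L\to X$, and a short cocycle check shows the transition functions of $E_1*E_2\to X$ are linear. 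Iterating, the composition of all the rank~$1$ sprays is a single dominating spray on a vector bundle over $X$. In short, the missing idea is: rather than forcing all bundles to be trivial (which needs an affine base), force them to have rank~$1$, where Picard-group invariance plays the role your Serre-splitting move played in the affine case.
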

Theorem \ref{mthm} answers a question in \cite[p.~230]{For17}. 
The equivalence of the first two properties is well known 
for homogeneous spaces of a linear algebraic group, see \cite[Proposition 6.7]{For20}. 
 It is unknown however whether this equivalence  also  holds in the analytic category; cf. \cite{Kus20}.

Comparing Theorem \ref{mthm} with \cite[Theorem 1]{LT17} and \cite[Corollary 6.6]{For20} we deduce the
following results, see \cite{LT17} and \cite{For20} for the
terminology. 
\begin{cor}\label{el.c2} 
For a smooth algebraic manifold $X$ the following conditions are equivalent:
\begin{enumerate}
\item[(a)] $X$ is elliptic;
\item[(b)] $X$ is locally elliptic;
\item[(c)] $X$ is subelliptic;
\item[(d)] $X$ satisfies Gromov's condition ${\rm aEll}_1$;
\item[(e)] $X$ satisfies the algebraic homotopy Runge principle.
\end{enumerate}
\end{cor}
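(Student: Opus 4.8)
The corollary is a formal consequence of Theorem~\ref{mthm} and the cited literature: the equivalence $\mathrm{(a)}\Leftrightarrow\mathrm{(b)}\Leftrightarrow\mathrm{(c)}$ is exactly Theorem~\ref{mthm}, while the remaining equivalences — relating subellipticity to conditions $\mathrm{(d)}$ and $\mathrm{(e)}$ — are supplied by \cite[Theorem~1]{LT17} together with \cite[Corollary~6.6]{For20}; chaining these yields $\mathrm{(a)}\Leftrightarrow\cdots\Leftrightarrow\mathrm{(e)}$. So the mathematical content is Theorem~\ref{mthm}, whose proof I now sketch. Since ``elliptic $\Rightarrow$ subelliptic $\Rightarrow$ locally elliptic'' is immediate, everything comes down to showing that a locally elliptic $X$ is elliptic, which I do along the chain ``locally elliptic $\Rightarrow$ subelliptic $\Rightarrow$ elliptic''.

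I assume $X$ quasi-projective (the general case should reduce to this) and fix an ample line bundle $L$. Local ellipticity, openness of the domination locus, and quasi-compactness let me cover $X$ by finitely many affine opens $U_j=X_{\sigma_j}=\{\sigma_j\neq0\}$, $\sigma_j\in H^0(X,L^{d_j})$, over each of which there is a local spray $(F_j,q_j,t_j)$, $F_j=U_j\times\AA^{r_j}$ trivial, that dominates at \emph{every} point of $U_j$. \emph{Step 1 (globalization by damping).} I replace $(F_j,q_j,t_j)$ by a spray over all of $X$: for $m\gg0$ the map $w\mapsto\sigma_j^{\,m}(p(w))\cdot w$ is a morphism $\mu_j\colon(L^{-md_j})^{\oplus r_j}\to X\times\AA^{r_j}$ over $X$ which is an isomorphism over $U_j$ and contracts the complement onto the zero section, and the composite $t_j\circ\mu_j$, defined a priori only over $U_j$, extends across $X\setminus U_j$ by $w\mapsto p(w)$ once $m$ is large enough that $\sigma_j^{\,m}$ absorbs the poles, along $X\setminus U_j$, of the (bounded-degree, bounded-pole-order) coordinate expansions of $t_j$ in the fibre variable. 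This yields a global spray $(\tilde F_j,\tilde q_j,\tilde t_j)$ with $\tilde F_j\cong(L^{-md_j})^{\oplus r_j}$ whose differential at $0_x$ has the same image as that of $t_j$ for $x\in U_j$ (the fibre map being $t_j$ precomposed with an isomorphism); as the $U_j$ cover $X$, the finite family $\{(\tilde F_j,\tilde q_j,\tilde t_j)\}_j$ dominates everywhere, so $X$ is subelliptic.

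\emph{Step 2 (composition).} For a dominating finite family $(E_1,p_1,s_1),\dots,(E_N,p_N,s_N)$ the Gromov--Forstneri\v{c} composition yields, on the iterated pullback $\mathcal E:=s_1^{*}E_2\ast s_2^{*}E_3\ast\cdots$, a morphism $s\colon\mathcal E\to X$ whose differential at the composed zero section $\zeta(X)$ has image $\sum_i\mathrm ds_i(T_{0_{i,x}}E_{i,x})=T_xX$ for every $x$. The obstacle is that $\mathcal E\to X$ is only a \emph{tower} of vector bundles: an affine smooth morphism $\mathcal E=\Spec_X\mathcal A\to X$ which, by Lindel's solution of the Bass--Quillen problem over smooth affine algebras (generalizing Quillen--Suslin), is Zariski-locally trivial with affine-space fibres, but with transition maps in $\Aut(\AA^m)$ rather than $\GL_m$ — there being no algebraic transport of the fibre of $E_{i+1}$ along $s_i$, so $\mathcal E$ need not carry a compatible scalar multiplication over $X$. \emph{Step 3 (straightening $\mathcal E$; the crux).} It suffices to find a vector bundle $V\to X$ and a morphism $\Phi\colon V\to\mathcal E$ over $X$, defined on all of $V$, sending the zero section into $\zeta(X)$ and with $\mathrm d\Phi|_{0_x}$ an isomorphism onto $T_{\zeta(x)}\mathcal E_x$; then $(V,p_V,s\circ\Phi)$ is an honest vector-bundle spray, dominating because domination is a first-order condition along the zero section. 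The natural choice is $V:=\zeta^{*}T_{\mathcal E/X}=(\mathcal I/\mathcal I^{2})^{\vee}$ for $\mathcal I\subset\mathcal A$ the ideal sheaf of $\zeta(X)$, and producing $\Phi$ amounts to producing an $\mathcal O_X$-algebra map $\mathcal A\to\mathrm{Sym}_{\mathcal O_X}(\mathcal I/\mathcal I^{2})$ compatible with the augmentations to $\mathcal O_X$ and inducing the identity on $\mathcal I/\mathcal I^{2}$ — an algebraic ``exponential'', i.e.\ a splitting of the $\mathcal I$-adic filtration up to positive degree. Over an affine base this is unobstructed; in general one builds it degree by degree along the filtration, with obstructions in finitely many coherent cohomology groups $H^{1}\bigl(X,-\bigr)$ built from symmetric powers of $V$ and $V^{\vee}$, and these are made to vanish by first replacing the starting sprays with sufficiently negative twists $E_i\otimes L^{-k}$ (which costs nothing: the trick of Step~1, applied fibrewise, restores domination) and invoking Serre's vanishing theorem. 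Assembling $\Phi$ from the resulting local data completes the proof that $X$ is elliptic.

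The one genuine obstacle is Step~3 — the passage from the composed (tower) spray to a vector-bundle spray. The damping of Step~1 and the Zariski-local triviality of $\mathcal E$ in Step~2 are, modulo bookkeeping with pole orders and twists, routine; the point that makes the algebraic statement hold, while the analogous analytic question remains open, is precisely that the obstruction to ``linearizing $\mathcal E$ along its zero section'' is coherent-cohomological and can be killed by Serre vanishing, a tool with no holomorphic counterpart.
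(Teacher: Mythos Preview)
Your first paragraph is exactly how the paper deduces the corollary: Theorem~\ref{mthm} gives (a)$\Leftrightarrow$(b)$\Leftrightarrow$(c), and \cite[Theorem~1]{LT17} together with \cite[Corollary~6.6]{For20} supply the link to (d) and (e). That part is correct and matches the paper.

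Your sketch of Theorem~\ref{mthm} itself, however, diverges from the paper at the decisive step, and the divergent route has genuine gaps. Step~1 (damping a local spray to a global one by tensoring with $\OOO_X(-nD)$) is the paper's Extension Lemma. But for ``subelliptic $\Rightarrow$ elliptic'' the paper does \emph{not} try to linearize the composed tower $\mathcal E$. Instead it first reduces to a dominating family of \emph{rank~$1$} sprays (restrict each $E_i$ to line subbundles over a trivializing chart, then globalize each line by the damping trick), and then shows that a composition of rank-$1$ sprays \emph{is} already a vector-bundle spray: for $E_2$ of rank~$1$ the line bundle $s_1^*E_2\to E_1$ lies in $\Pic(E_1)\cong\Pic(X)$ (pullback along the vector bundle $p_1$ is an isomorphism on Picard groups, \cite{Mag75}), hence $s_1^*E_2\cong p_1^*L$ for some $L\in\Pic(X)$, and the resulting transition functions are linear. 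Iterating gives a single dominating spray. No cohomological vanishing and no projectivity hypothesis are needed.

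Your Step~3, by contrast, rests on Serre vanishing, which requires $X$ projective; for a merely quasi-projective (let alone arbitrary smooth) $X$ there is no reason for $H^1(X,\mathcal F\otimes L^n)$ to vanish for $n\gg 0$, and your parenthetical ``the general case should reduce to this'' is unsupported. Moreover, the claim that twisting the $E_i$ by $L^{-k}$ pushes the obstruction classes into a Serre-vanishing range is not justified: the tower $\mathcal E$ is built via the \emph{spray maps} $s_i$, not the projections, so twisting and re-damping alters the $s_i$ and hence the tower in a way that does not visibly track with the twist. The paper's rank-$1$/Picard argument sidesteps all of this.
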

See \cite[Sec. 6.2]{For20} for relations of (d) and (e) to approximation, interpolation and 
Oka-Grauert $h$-Principle in the algebraic setting.

As another immediate corollary we mention the following version of \cite[Corollary 1.5]{Kus22}, 
see also \cite[Corollary 6.26]{For20}. 
\begin{cor}\label{cor:kusa} 
The universal cover of a smooth (sub)elliptic variety is an elliptic algebraic variety.
\end{cor}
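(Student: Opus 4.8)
The plan is to combine Theorem~\ref{mthm} with the elementary fact that sprays pull back along étale coverings, and to locate the only genuine difficulty in the algebraicity of the cover. Write $\pi\colon\tilde X\to X$ for the universal covering. By Theorem~\ref{mthm} we may assume that $X$ is elliptic; fix a spray $(E,p,s)$ on $X$ dominating at every point. We may also use, equivalently, that $X$ is locally elliptic.

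Suppose first that $\tilde X$ is known to be an algebraic variety (necessarily smooth, since $\pi$ is étale). Then the conclusion is quick: the total space $\pi^{*}E$ is a vector bundle over the simply connected variety $\tilde X$, hence simply connected, so the composition $\pi^{*}E\to E\to X$ lifts through $\pi$ to a morphism $\tilde s\colon\pi^{*}E\to\tilde X$. Normalising the lift to fix one point of the zero section, and using that $s|_{Z}=p|_{Z}$ is the identity of $X$, one gets that $\tilde s$ restricts to the identity on the zero section of $\pi^{*}E$; thus $(\pi^{*}E,\pi^{*}p,\tilde s)$ is a spray on $\tilde X$. Moreover the restriction of $\pi\circ\tilde s$ to a fibre of $\pi^{*}E$ is $s|_{E_{x}}$, which is dominant, and since $\pi$ is étale the restriction of $\tilde s$ to that fibre is then dominant as well; hence the spray is dominating at every point and $\tilde X$ is elliptic. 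One may equally argue via local ellipticity: a local spray over $U\subset X$ dominating at $x$ pulls back to a local spray over $\pi^{-1}(U)$ dominating at every point of $\pi^{-1}(x)$, because $\pi$ is a local isomorphism for the analytic structure; so $\tilde X$ is locally elliptic and Theorem~\ref{mthm} applies.

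The step I expect to be the main obstacle is precisely to show that $\tilde X$ is an algebraic variety. Since an algebraic étale covering of a finite-type variety has finite fibres, this is equivalent to the finiteness of the topological fundamental group $\pi_{1}(X)$, after which $\pi$ is a finite étale covering and the preceding argument applies. To see that $\pi_{1}(X)$ is finite one exploits the strong rational connectivity forced by a dominating spray: for each $x\in X$ the set $s(E_{x})\subset X$ is constructible and, being the image of the dominant morphism $s|_{E_{x}}\colon\AA^{r}\to X$, contains $x$ together with a dense open subset $U_{x}$ of $X$, while every $y=s(e)\in s(E_{x})$ is joined to $x$ by the morphism $\AA^{1}\to X$, $t\mapsto s(te)$. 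As any two dense open subsets of the irreducible variety $X$ meet, any two points of $X$ are linked by a chain of two images of $\AA^{1}$. Extracting the finiteness of $\pi_{1}(X)$ from this $\AA^{1}$-connectivity is the heart of the matter; once it is available, Corollary~\ref{cor:kusa} follows, in parallel with the analytic localization arguments of \cite[Corollary~1.5]{Kus22} and \cite[Corollary~6.26]{For20}, where no such algebraicity issue arises.
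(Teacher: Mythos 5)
Your reduction is sound in outline, and the ``easy half'' is handled correctly: once one knows that the universal cover $\pi\colon\tilde X\to X$ is a finite \'etale cover of algebraic varieties, pulling back a dominating spray (or, more in the spirit of the paper, a dominating family of sprays, or local sprays) and lifting $s\circ(\pi^*E\to E)$ through $\pi$ via simple connectedness of the total space does produce a dominating spray on $\tilde X$; alternatively $\tilde X$ is then subelliptic and Theorem~\ref{mthm} finishes. But the step you yourself flag as ``the heart of the matter'' --- the finiteness of $\pi_1(X)$, equivalently the algebraicity of $\tilde X$ --- is exactly the content of the result the paper is quoting, and you leave it unproven. The paper does not reprove it either: Corollary~\ref{cor:kusa} is obtained by combining \cite[Corollary~1.5]{Kus22}, which already states that the universal cover of a smooth subelliptic variety \emph{is} a subelliptic algebraic variety, with Theorem~\ref{mthm} to upgrade ``subelliptic'' to ``elliptic''. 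So as a self-contained argument your proposal has a genuine gap, and as a citation-based argument it misreads the references: \cite{Kus22} is precisely the algebraic statement in which the ``algebraicity issue'' is resolved, not an analytic argument where it does not arise.

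Moreover, the route you gesture at --- $\AA^1$-connectivity of $X$ --- is not the one that closes the gap; chaining images of $\AA^1$ does not by itself control $\pi_1$. What does work is the dominant morphism from an affine space that you already have in hand: the iterated composition of a dominating family of sprays restricts on the fiber over a point to a dominant morphism $f\colon\AA^N\to X$ (the fiber of the composed bundle is an iterated vector bundle over $\AA^{r_1}$, hence an affine space). For a dominant morphism from a simply connected variety to a smooth variety $X$, the image of $f_*$ in $\pi_1(X)$ is trivial and, by Stein factorization of $f$ over a dense open $U\subset X$ together with the surjectivity of $\pi_1(U)\to\pi_1(X)$ for $X$ smooth, it also has finite index; hence $\pi_1(X)$ is finite and $\tilde X\to X$ is finite \'etale. (Note the necessity of a dominant \emph{morphism} rather than mere unirationality: $\mathbb{G}_m$ is unirational with $\pi_1=\ZZ$, and correspondingly admits no nonconstant morphism from $\AA^1$ and is not algebraically subelliptic.) Either supply this argument or simply invoke \cite[Corollary~1.5]{Kus22} as the paper does.
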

\subsection{Examples of elliptic varieties} 
Recall that a variety $X$ of dimension $\ge 2$ is \emph{flexible} if through any smooth point $x\in X$ 
pass one-dimensional orbits of $\GG_a$-actions on $X$ whose velocity vectors spend 
the tangent space $T_xX$, see \cite{AFKKZ13}.  It is known that every smooth flexible variety is elliptic, 
see \cite[0.5.B]{Gro89} and \cite[Proposition 5.6.22(C)]{For17}; cf. also \cite[Appendix]{AFKKZ13}.
There are numerous examples of flexible varieties; see e.g.  \cite{AFKKZ13} and survey articles 
\cite{CPPZ21} and \cite{Arz22}. 

 The complement of a closed subset $Y$ of codimension $\ge 2$
in a flexible smooth quasi-affine variety $X$ of dimension $\ge 2$ is again flexible, 
see \cite[Theorem 1.1]{FKZ16}.
In particular, $X\setminus Y$ is elliptic. 

Recall that an algebraic variety $X$ of dimension $n$ belongs to class $\mathcal{A}_0$ 
if it can be covered by a finite number of copies of $\AA^n$. It belongs to class $\mathcal{A}$
 if it is the complement of a closed subvariety of codimension at least $2$
in a variety of class  $\mathcal{A}_0$. Any variety of class $\mathcal{A}$ is subelliptic, 
see \cite[Proposition 6.4.5]{For17}.  By Theorem \ref{mthm} it is elliptic. 

For example, the Grassmann manifold $X={\mathbb G}(k,n)$ 
of $k$-dimensional subspaces in $\AA^n$ belongs to class $\mathcal{A}_0$. Hence $X\setminus Y$ is elliptic for any 
closed subset $Y\subset X$ of codimention $\ge 2$, see 
\cite[Corollary 5.6.18(D)]{For17}.
This answers a question in [\emph{ibid}]. 

Furthermore, a variety of class $\mathcal{A}$ blown up along a smooth closed subvariety is subelliptic. The same concerns a smooth locally stably flexible variety, see \cite{LT17}, \cite{KKT18} and \cite[Theorem 6.4.8]{For17}. By Theorem \ref{mthm} all these varieties are elliptic.
See also \cite[Sec. 3.4(F)]{Gro89} for further potential examples.  
\subsection{Ellipticity versus rationality}
By definition, any  smooth (sub)elliptic variety is dominated by an affine space, hence is unirational and rationally connected.
Notice, however, that there are examples of flexible and so, 
elliptic smooth affine varieties that are not stably rational, see \cite[Example 1.22]{Pop11}. Gromov asked in \cite{Gro89}
whether any rational projective variety is elliptic. The same question can be asked for  unirational projective varieties, 
see \cite{BKK13}. 
%
%
\section{Composing and extending sprays}\label{sec1}
In this section $X$ stands for a smooth algebraic variety. 
We develop here several technical tools for the proof of Theorem \ref{mthm}.
\subsection{Composition of sprays}\label{Composition}
The \emph{composition} $(E_1*E_2,p_1*p_2, s_1*s_2)$  of two given sprays
 $(E_1, p_1,s_1)$ and $(E_2, p_2,s_2)$ over $X$ is defined via\footnote{Abusing notation, we do not distinguish between a vector bundle and its total space.} 
\[ \begin{array}{c} E_1*E_2=\{ (e_1,e_2) \in E_1\times E_2\,|\, e_1 \in p^{-1} (X), \,\,\,s_1(e_1)=p_2(e_2)\}=s_1^*(E_2),
\\ p_1*p_2(e_1,e_2)=p_1(e_1), \qquad
s_1*s_2(e_1,e_2)=s_2(e_2),\end{array}\]
see \cite[1.3.B]{Gro89}.
One considers also the iterated composition 
\[(E,p,s)=(E_1*\ldots *E_m,\,\,\,p_1*\ldots *p_m,\,\,\, s_1*\ldots *s_m)\] 
of a sequence of $m$ sprays $(E_i, p_i,s_i)$ over $X$, see [\emph{ibid}] or \cite[Definition 6.3.5]{For17}. 
Clearly, $p\colon E\to X$ is a fiber bundle whose general fiber is isomorphic to an affine space $\AA^N$ viewed as a variety. 
In general, $p\colon E\to X$ does not admit a vector bundle structure. 
However, it admits a natural section 
$\sigma\colon X\to E$ (which plays a role of zero section)
such that  $s|_{\sigma(X)}=p|_{\sigma(X)}$, see \cite[Definition 6.3.5]{For17}. 

If $p_i\colon E_i\to X$ are trivial vector bundles for all $i$, then the composition $(E,p,s)$ 
is a spray over $X$ with a trivial vector bundle $p\colon E\to X$, see \cite[Lemma 6.3.1]{For17}. 
Furthermore, we have the following fact. 
\begin{prop}\label{el.p1}
Consider  the composition $(E,p,s)=(E_1*E_2,p_1*p_2, s_1*s_2)$  of
sprays  $(E_1,p_1, s_1)$ and $(E_2,p_2, s_2)$ over $X$. If $(E_2,p_2, s_2)$
is of rank 1, then $(E,p, s)$ is a spray over $X$.
\end{prop}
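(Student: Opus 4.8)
The plan is to show that the composition $E = E_1 * E_2 = s_1^*(E_2)$ carries a vector bundle structure over $X$ when $E_2$ has rank $1$. Recall that $E_2$ itself is a vector bundle $p_2\colon E_2 \to X$, and $E$ is the fiber product of $p_1\colon E_1 \to X$ (a rank $r_1$ vector bundle) with $E_2$ along the map $s_1\colon E_1 \to X$; concretely, $E = \{(e_1,e_2) : s_1(e_1) = p_2(e_2)\}$ with $p = p_1 \circ \pr_1$ and $s = s_2 \circ \pr_2$. The projection $\pr_1\colon E \to E_1$ realizes $E$ as the pullback line bundle $s_1^*(E_2)$ over $E_1$. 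I would like to conclude that $p\colon E \to X$ is a vector bundle of rank $r_1 + 1$, with the natural section $\sigma$ obtained from the zero sections of $E_1$ and $E_2$ (using $s_1|_{Z_1} = p_1|_{Z_1}$ so that the two zero sections are compatible over $X$), and that $s|_{\sigma(X)} = p|_{\sigma(X)}$.

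The key steps, in order: first, observe that $\pi := \pr_1\colon E \to E_1$ is a line bundle (being the pullback of the line bundle $E_2$ along $s_1$), and it has a zero section $E_1 \hookrightarrow E$ coming from the zero section of $E_2$. Second, form the composite $p = p_1 \circ \pi\colon E \to X$; since $p_1$ is a vector bundle of rank $r_1$ and $\pi$ is a line bundle, I would argue that $p$ admits a vector bundle structure of rank $r_1 + 1$. The natural way to see this is locally: over a Zariski-open $U \subset X$ trivializing $E_1$, we have $p_1^{-1}(U) \cong U \times \AA^{r_1}$; then $\pi^{-1}(p_1^{-1}(U)) = s_1^*(E_2)|_{p_1^{-1}(U)}$, which is a line bundle over $U \times \AA^{r_1}$, and after possibly shrinking $U$ it is trivial, giving $\pi^{-1}(p_1^{-1}(U)) \cong U \times \AA^{r_1} \times \AA^1$. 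The point is to check that the transition functions, which a priori depend polynomially on the fiber coordinates of $E_1$, can be arranged to be linear in the combined fiber coordinate $(\AA^{r_1}, \AA^1)$ — or more precisely that one gets a genuine vector bundle structure on $p$, not merely an affine bundle. Third, verify the spray axiom: take $\sigma\colon X \to E$ to be the section sending $x$ to the pair (zero of $(E_1)_x$, zero of $(E_2)_{s_1(0_x)} = (E_2)_x$, using $s_1(0_x) = p_1(0_x) = x$); then $s(\sigma(x)) = s_2(0_x) = p_2(0_x) = x = p(\sigma(x))$, as required.

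I expect the main obstacle to be step two: promoting the a priori affine-bundle (or merely fiber-bundle) structure of $p\colon E \to X$ to an honest vector bundle structure. The subtlety is that $\pi\colon E \to E_1$ is a line bundle over $E_1$, but to build a vector bundle over $X$ one needs the line bundle's transition data to interact linearly with the vector space structure on the fibers of $E_1$; this is not automatic since $E_2$ is pulled back along $s_1$, which is only a morphism, not a bundle map. One clean route is to observe that $\pi\colon E \to E_1$, being a line bundle, is the same as the total space of $s_1^*(\mathcal{L})$ for a line bundle $\mathcal{L}$ on $X$ (where $E_2$ corresponds to $\mathcal{L}$), and then $E = $ total space of $p_1^*(\mathcal{L}^\vee)^\vee$... more directly: the sheaf of sections of $p$ over an open $V \subseteq X$ is the $\OOO_V$-module $p_{1*}\OOO_{E_1}\text{-twisted}$; rather than chase sheaves, I would argue that $E \to X$ is Zariski-locally $V \times \AA^{r_1} \times \AA^1$ and that the transition functions lie in $\GL_{r_1+1}(\OOO(V))$ by writing down the gluing explicitly: a transition for $E_1$ is $g \in \GL_{r_1}(\OOO(V \cap V'))$ and one for $E_2$ is $h \in \OOO^*(V \cap V')$ (since $\Ga \cong \AA^1$ linearly), and since $E_2$ is pulled back via $s_1$ but $s_1$ maps $V$-over-$X$ fiberwise, $h$ remains a function of $x \in X$ only — hence the combined transition $\begin{pmatrix} g & 0 \\ 0 & h \end{pmatrix}$ is linear. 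If instead $E_2$ does not split off so cleanly, one falls back on the general principle that an extension of a vector bundle by a line bundle (in the appropriate sense) is again a vector bundle, which is what rank $1$ buys us and rank $\ge 2$ would not.
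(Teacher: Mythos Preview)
Your overall strategy matches the paper's exactly: show that the composite $p=p_1\circ\pi\colon E\to X$ is a vector bundle by producing local trivializations $U_i\times\AA^{r_1}\times\AA^1$ and checking that the transition matrices lie in $\GL_{r_1+1}(\OOO(U_i\cap U_j))$. The spray axiom for the natural section $\sigma$ is also handled correctly.

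There is, however, a genuine gap at the crucial step. You claim that the line-bundle transition $h$ ``remains a function of $x\in X$ only'' because ``$s_1$ maps $V$-over-$X$ fiberwise.'' But $s_1$ is \emph{not} fiberwise over $X$: if $s_1(x,v)$ depended only on $x$ then $s_1$ would equal $p_1$ and the spray would be trivial. The naive pullback transition for $s_1^*(E_2)$ is $h\circ s_1$, which genuinely depends on the fiber coordinate $v$, and then your block matrix $\begin{pmatrix} g(x) & 0\\ 0 & h(s_1(x,v))\end{pmatrix}$ is \emph{not} $\OOO(U_i\cap U_j)$-linear in $(v,t)$.

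The paper fixes this with the one idea you are missing: since $p_1\colon E_1\to X$ is a vector bundle over a smooth variety, pullback induces an isomorphism $p_1^*\colon\Pic(X)\xrightarrow{\sim}\Pic(E_1)$ (cf.\ \cite{Mag75}). Hence the line bundle $s_1^*(E_2)$ on $E_1$ is isomorphic to $p_1^*(L_1)$ for some line bundle $L_1$ on $X$. After this replacement the transition function for the line-bundle factor lies in $\OOO^*(p_1^{-1}(U_i\cap U_j))=\OOO^*((U_i\cap U_j)\times\AA^{r_1})=p_1^*\OOO^*(U_i\cap U_j)$, i.e.\ it is constant along the $\AA^{r_1}$-fibers, and your block-diagonal matrix is now honestly linear. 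Equivalently, you may bypass the Picard statement and invoke directly that invertible regular functions on $W\times\AA^{r_1}$ (with $W$ reduced) are pulled back from $W$; either way, some form of $\AA^1$-homotopy invariance is the missing ingredient, and this is precisely where the rank-$1$ hypothesis is used.
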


\begin{proof} 
By the preceding it suffices to show that $p\colon E\to X$ is a vector bundle. 
The pullback via the projection $p_1\colon E_1\to X$ yields an isomorphism $\Pic(X)\cong \Pic(E_1)$, 
see \cite[Theorem 5]{Mag75}.  
Therefore, the line bundle $s_1^*(E_2)\to E_1$ is isomorphic to $p_1^*(L_1)$ 
for a line bundle $L_1\to X$. 
 
Choose an affine open cover  $\{U_i\} $  of $X$ which is trivializing simultaneously for 
$p_1\colon E_1\to X$ and for $L_1\to X$
and consider the cylinders $Y_i:=p_1^{-1} (U_i)\simeq U_i\times \AA^r$ and 
$p_1^*(L|_{U_i})\simeq U_i\times \AA^r\times \AA^1$.
Up to these trivializations the composition $p_1^*(L_1|_{U_i})\to U_i$ coincides 
with the standard projection ${\rm pr}_1\colon U_i\times \AA^{r+1}\to U_i$. 
The transition function $\varphi_{i,j}$ between 
$p_1^*(L_1|_{U_i})\to U_i$ and $p_1^*(L_1|_{U_j})\to U_j$ over $U_i\cap U_j$
 is of the form 
 \[\varphi_{i,j}\colon (x, v, t) \mapsto (x, f_{i,j}(x,v), g_{i,j}(x,v)t), \quad (x, v, t)\in (U_i\cap U_j)\times \AA^r\times \AA^1\]
where 
\[f_{i,j}\in\GL(r, \OOO(U_i\cap U_j))\quad\text{and}\quad g_{i,j}\in \OOO^*(Y_i\cap Y_j)=p_1^*(\OOO^*(U_i\cap U_j)), \]
that is, the functions $g_{i,j}(x,v)=g_{i,j} (x)$ do not depend on $v\in\AA^r$.
Finally $\varphi_{i,j}\in \GL(r+1, \OOO(U_i\cap U_j))$,
 which proves that $p=p_1*p_2 \colon E \to X$ is a vector bundle.
\end{proof}

\begin{cor}\label{cor:comp-1-sprays}
The composition $(E,p,s)$ of $r$ rank $1$ sprays $(L_i, q_i,s_i)$ over $X$ is a spray of rank $r$ over $X$. 
If the family $(L_i, q_i,s_i)$ is dominating, then also the spray $(E,p,s)$ is.
\end{cor}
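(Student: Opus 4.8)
The plan is to induct on $r$, with Proposition~\ref{el.p1} supplying the decisive input. The case $r=1$ is vacuous. For the inductive step I would peel off the last factor, writing $(E,p,s)=(E'*L_r,\,p'*q_r,\,s'*s_r)$ with $(E',p',s')=(L_1*\cdots*L_{r-1},\ldots)$ --- this is how the iterated composition is built up. By the induction hypothesis $(E',p',s')$ is a spray of rank $r-1$ over $X$; in particular $p'\colon E'\to X$ carries a genuine vector bundle structure. Since $(L_r,q_r,s_r)$ has rank $1$, Proposition~\ref{el.p1} applies verbatim and shows that $(E,p,s)$ is a spray over $X$, visibly of rank $(r-1)+1=r$.

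For the second assertion, fix $x\in X$ and assume $T_xX=\sum_{i=1}^{r}{\rm d}s_i\bigl(T_{0_{i,x}}L_{i,x}\bigr)$. The point is to unwind the structure of the fibre $E_x$: its elements are the tuples $(e_1,\ldots,e_r)$ with $e_1\in L_{1,x}$, $e_2\in L_{2,s_1(e_1)}$, $e_3\in L_{3,s_2(e_2)}$, and so on, with $s(e_1,\ldots,e_r)=s_r(e_r)$, while the distinguished section $\sigma$ of the composition sends $x$ to $0_x=(0_{1,x},\ldots,0_{r,x})$. For each $k$ I would restrict $s|_{E_x}$ to the sublocus on which $e_i$ is the zero vector (in its respective fibre) for every $i\neq k$: on it the vanishing of $e_1,\ldots,e_{k-1}$ forces $e_k$ to lie over $x$, and the trailing zeros give $s(0,\ldots,0,e_k,0,\ldots,0)=s_k(e_k)$ because a spray carries each zero vector to its own base point. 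Thus $s|_{E_x}$ restricted to this locus is the map $L_{k,x}\to X$, $e_k\mapsto s_k(e_k)$, whose differential at $0_{k,x}$ is the restriction of ${\rm d}s_k$ to $T_{0_{k,x}}L_{k,x}$. Therefore the image of ${\rm d}(s|_{E_x})$ at $0_x$ contains $\sum_{k=1}^{r}{\rm d}s_k\bigl(T_{0_{k,x}}L_{k,x}\bigr)=T_xX$, so $s|_{E_x}$ is dominant at $0_x$. As $x$ is arbitrary, $(E,p,s)$ is dominating whenever the family $(L_i,q_i,s_i)$ is.

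The routine facts I would record in passing are: that the iterated composition is associative, so that the induction can be arranged as above (standard, cf.~\cite[Definition 6.3.5]{For17}); that the distinguished section of the composition has the asserted form $x\mapsto(0_{1,x},\ldots,0_{r,x})$ and that domination is tested at it; and the identity $s_j(0_{j,y})=y$ used for the trailing zeros. None of these is a genuine obstacle, and the crux --- Proposition~\ref{el.p1} --- is already in hand. The only place calling for a little care is the bookkeeping of which base point each $e_i$ sits over as the single free coordinate $e_k$ varies; it is exactly this that makes the differential of the restricted spray map equal ${\rm d}s_k$ on the fibre tangent space, and hence makes the dominating hypothesis on the family pass to $(E,p,s)$.
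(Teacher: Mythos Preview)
Your argument is correct and follows the same path as the paper: the first assertion is obtained from Proposition~\ref{el.p1} by the obvious induction you describe, and for the second the paper simply cites \cite[Lemma~6.3.6]{For17}, whose content is precisely the fibrewise tangent-space computation you carry out explicitly. In short, you have unpacked what the paper compresses into two citations, with no essential divergence.
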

\begin{proof}
The first assertion is immediate from Proposition \ref{el.p1}; see \cite[Lemma 6.3.6]{For17} for the second.
\end{proof}
\subsection{Extension lemma}\label{ss:extension}
The following lemma and its proof follow closely Gromov's Localization Lemma and its proof, see \cite[3.5.B]{Gro89}. 
\begin{lem}\label{lem:ext} Let $U\subset X$  be a dense open subset and $(E,p,s)$  be a local spray  
on $U$  with values in $X$ dominating 
at a point $x\in U$. Then there exists a spray $(\tilde E,\tilde p,\tilde s)$ on $X$ dominating at $x$. 
\end{lem}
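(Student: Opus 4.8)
\emph{Strategy.} The plan is to follow the scheme of Gromov's Localization Lemma \cite[3.5.B]{Gro89}: shrink the local spray until it is a \emph{trivial} spray over an affine base, and then extend it to all of $X$ by ``damping'' it near the boundary, that is, by reparametrizing its fibers by a high power of a regular section vanishing on the boundary divisor. The point $x$ lies off that divisor, so the extended spray remains dominating there.

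\emph{Reductions.} Since $X$ is irreducible, every nonempty open set is dense; so we may replace $U$ by $U\cap W$ for an affine chart $W\ni x$ and shrink further until $U$ is affine, $x\in U$, and $E|_U\cong U\times\AA^r$ is trivial. Then $s$ is a morphism $U\times\AA^r\to X$ with $s(\,\cdot\,,0)=\id_U$ and with $\mathrm{d}s_{(x,0)}|_{\{x\}\times\AA^r}\colon\AA^r\to T_xX$ surjective. Write $X\setminus U=D\cup Z$, $D$ being the union of the codimension one components of $X\setminus U$ and $Z$ the union of the remaining, codimension $\ge 2$, ones. Using that $X$ is smooth, hence normal, I would argue that $U$ may be enlarged to $X\setminus D$ --- i.e.\ that one may assume $X\setminus U=D$ is a hypersurface --- by extending the trivial bundle and the morphism $s$ across the codimension $\ge 2$ locus $Z$. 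If $D=\varnothing$ this already proves the lemma (now $U=X$); so assume $D\ne\varnothing$.

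\emph{The damped spray.} Let $M=\OOO_X(D)$ and $\delta\in H^0(X,M)$ the canonical section, with zero locus exactly $D$. Fix $N\gg0$, put $L:=M^{\otimes(-N)}$, and note that for $\ell$ in the fiber $L_{x'}$ there is a natural scalar $\langle\ell,\delta^N(x')\rangle\in\kk$ which vanishes precisely when $\ell=0$ or $x'\in D$, and which for $x'\notin D$ gives a linear isomorphism $L_{x'}\simto\kk$. Let $\tilde E:=L^{\oplus r}$ with projection $\tilde p$, and define $\tilde s\colon\tilde E\to X$ by $\tilde s(\ell_1,\dots,\ell_r):=s(x',\langle\ell_1,\delta^N(x')\rangle,\dots,\langle\ell_r,\delta^N(x')\rangle)$ for $x':=\tilde p(\ell_1,\dots,\ell_r)\in U$, and $\tilde s(\ell_1,\dots,\ell_r):=x'$ for $x'\in D$. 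Over $U$ the section $\delta^N$ trivializes $L$, and in this trivialization $\tilde s$ is just $s$; hence $\tilde s$ restricts to $\tilde p$ on the zero section over $U$, and its differential along the fiber over $x$ is $\mathrm{d}s_{(x,0)}|_{\{x\}\times\AA^r}$ precomposed with the isomorphism $\tilde E_x\simto\AA^r$ induced by $\delta^N(x)$, hence onto $T_xX$. The only fact left to check is that, for $N$ large, $\tilde s$ extends to a morphism on all of $\tilde E$ coinciding with $\tilde p$ over $D$; granting this, $(\tilde E,\tilde p,\tilde s)$ is a spray over $X$ dominating at $x$, as wanted. Equivalently, one could split $s$ into the $r$ rank one local sprays $t\mapsto s(\,\cdot\,,te_i)$, damp each, and recompose the resulting global rank one sprays by Corollary \ref{cor:comp-1-sprays}.

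\emph{Main obstacle.} The crux is the extension across $D$; it replaces the cutoff function of Gromov's analytic argument, unavailable algebraically. It is checked locally: on an affine chart $X_\alpha$ with $D\cap X_\alpha=\{h_\alpha=0\}$ and $M$ trivialized so that $\delta$ becomes $h_\alpha$, the map $\tilde s$ reads $(x',c)\mapsto s(x',h_\alpha(x')^N c)$, and since $s$ is a morphism its coordinates, read in local charts of the target, are polynomials in $c$ with coefficients of bounded pole order along $\{h_\alpha=0\}$; substituting $h_\alpha^N c$ for $c$ clears all denominators once $N$ exceeds those orders, and the term of degree zero in $c$ equals $s(x',0)=x'$, forcing agreement with $\tilde p$ over $D$. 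These local extensions glue because $s$ and $\delta$ are global. The other point requiring care --- and more likely to need its own lemma --- is the preliminary reduction to a hypersurface complement, which rests on the normality of $X$.
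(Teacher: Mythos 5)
Your construction is essentially the paper's: shrink $U$ until $E|_U$ is trivial and $X\setminus U$ is a divisor $D$, replace the trivial bundle by $r$ copies of $\OOO_X(-ND)$, and "damp" $s$ by composing with multiplication by $\sigma^N$ (your pairing with $\delta^N$ is exactly the bundle homomorphism $\psi\colon \OOO_X(-ND)^{\oplus r}\to\OOO_X^{\oplus r}$ the paper uses), with $N\gg0$ clearing the poles along $D$; your local "clearing denominators" check is the same justification the paper leaves implicit behind "for all $n\gg1$".

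The one step that does not survive scrutiny is your preliminary enlargement of $U$ across the codimension $\ge 2$ locus $Z$: normality lets you extend \emph{regular functions}, hence morphisms to \emph{affine} targets, across codimension $\ge 2$, but $s$ takes values in the arbitrary variety $X$, and such morphisms need not extend (e.g. $(x,y)\mapsto[x:y]$ on $\AA^2\setminus\{0\}$ with target $\PP^1$). Fortunately this reduction is unnecessary: instead of enlarging $U$ you should \emph{shrink} it further so that its complement becomes the support of an effective divisor (e.g. take $U$ affine and use that the complement of an affine open in a normal variety is pure of codimension one, or just throw in an extra hypersurface containing $Z$). With that substitution your argument coincides with the paper's proof.
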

\begin{proof}
Shrinking $U$ we may suppose that 
\begin{itemize}
\item $U=X\setminus {\rm supp}(D)$ where $D$ is a reduced effective divisor on $X$, and
\item 
$p\colon E=U\times\AA^r\to U$ is a trivial vector bundle on $U$.
\end{itemize} 
We extend $p\colon E\to U$ to a  trivial vector bundle  $p'\colon E'=X\times\AA^r\to X$ on $X$.
Let 
$\tilde p\colon \tilde E=E'\otimes \OOO_X(-nD)\to X$ where $n\in\NN$. 
Thus, $\tilde E$ is a direct sum of $r$ samples of the line bundle $\OOO_X(-nD)$. We have
\[{\rm Hom}(\OOO_X(-nD),\OOO_X)={\rm Hom}(\OOO_X, \OOO_X(nD)) =H^0(X,\OOO_X(nD)).\]
Pick a section $\sigma\in H^0(X,\OOO_X(D))$ with ${\rm div}(\sigma)=D$. Then $\sigma^n$ 
defines a homomorphism of line bundles
$\OOO_X(-nD)\to\OOO_X$ identical on  $X$ which vanishes to order $n$ on 
$\tilde p^{-1}( {\rm supp}(D))$ 
and restricts to an isomorphism over $U$. The direct sum of these homomorphisms yields
a homomorphism of vector bundles $\psi\colon\tilde E\to E'$
identical on $X$ with similar properties. Extend $s$ to a rational map 
$s'\colon E'\dasharrow X$. Then for all $n\gg 1$ the composition
$\tilde s=s'\circ\psi\colon \tilde E\to X$ is a morphism such that $s|_{\tilde Z} =\tilde p|_{\tilde Z}$ 
where $\tilde Z$ is the zero section of 
$\tilde p\colon \tilde E\to X$. The resulting spray $(\tilde E,\tilde p,\tilde s)$ 
on $X$ is clearly dominating at $x$. 
\end{proof}
The following corollary is immediate. It is implicitly present in the proof of the 
Localization Lemma in \cite[3.5.B]{Gro89}, cf. also \cite[3.5.B$'$]{Gro89} and \cite[Proposition 6.4.2]{For17}.
\begin{cor}\label{cor:loc-ell-subell}
If $X$ is locally elliptic, then it is subelliptic. 
\end{cor}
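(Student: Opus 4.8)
The plan is to deduce the statement from the Extension Lemma \ref{lem:ext} together with the quasi-compactness of $X$. Fix a point $x\in X$. By local ellipticity there is a local spray $(E,p,s)$ with values in $X$ defined over some open neighbourhood $U$ of $x$ and dominating at $x$. Since $X$ is irreducible, $U$ is automatically dense in $X$, so Lemma \ref{lem:ext} applies and yields a spray $(\tilde E_x,\tilde p_x,\tilde s_x)$ over the whole of $X$ which is dominating at $x$.

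Next I would record the elementary fact that, for any spray $(F,q,t)$ over $X$, the set
\[D(F,q,t)=\{\,y\in X\,:\,(F,q,t)\ \text{is dominating at}\ y\,\}\]
is Zariski open in $X$. Indeed, along the zero section $Z\cong X$ of $q$ the differential ${\rm d}t$ restricts on the vertical subbundle $\ker {\rm d}q|_Z\cong F$ to a homomorphism of vector bundles $\phi\colon F\to TX$ over $X$, and, by definition, $(F,q,t)$ is dominating at $y$ exactly when $\phi_y\colon F_y\to T_yX$ is surjective, i.e. $\operatorname{rk}\phi_y=\dim X$. As the rank of a morphism of vector bundles is a lower semicontinuous function on the base, the set $\{y\in X:\operatorname{rk}\phi_y=\dim X\}=D(F,q,t)$ is open.

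Applying this to the sprays produced in the first step, the open sets $V_x:=D(\tilde E_x,\tilde p_x,\tilde s_x)$, $x\in X$, form an open cover of $X$, since $x\in V_x$ by construction. As $X$ is Noetherian, hence quasi-compact, finitely many of them, say $V_{x_1},\dots,V_{x_m}$, already cover $X$. The finite family of sprays $(\tilde E_{x_j},\tilde p_{x_j},\tilde s_{x_j})$, $j=1,\dots,m$, over $X$ is then dominating at every point: given $y\in X$, choose $j$ with $y\in V_{x_j}$; then ${\rm d}\tilde s_{x_j}\bigl(T_{0_{j,y}}\tilde E_{x_j,y}\bigr)=T_yX$ already, so a fortiori the sum over all $j$ equals $T_yX$. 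Hence $X$ is subelliptic.

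This argument is really just a packaging of Lemma \ref{lem:ext}; the only step that is not completely formal is the openness of the dominating locus used in the second paragraph, so that is where I would be most careful. (The index $n$ appearing in the definition of subellipticity is merely the number of sprays in the family; should one wish to normalize it one may replace the family above by its iterated composition, invoking Corollary \ref{cor:comp-1-sprays} and Proposition \ref{el.p1}, which does not affect the conclusion.)
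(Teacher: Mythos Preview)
Your argument is correct and is precisely the natural expansion of what the paper means by calling the corollary ``immediate'' from Lemma~\ref{lem:ext}: extend each local dominating spray to a global one, note that the dominating locus of a spray is Zariski open, and pass to a finite subcover by quasi-compactness. The closing parenthetical about composing the resulting sprays via Corollary~\ref{cor:comp-1-sprays} and Proposition~\ref{el.p1} is unnecessary here (and those results apply specifically to rank~$1$ sprays), but this does not affect the main proof.
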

Next we deduce the following useful fact.
\begin{lem}\label{lem:rank-1}
Any subelliptic  smooth variety $X$ admits a dominating family of sprays of rank $1$. 
\end{lem}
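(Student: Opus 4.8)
The plan is to start from a dominating family of sprays $(E_i,p_i,s_i)$, $i=1,\dots,k$, defined over all of $X$, and replace it by a dominating family of \emph{rank $1$} sprays. The key observation is that a spray of rank $r$ can be ``sliced'' into rank $1$ subsprays, at least locally, by restricting the vector bundle $E_i$ to a line subbundle; the difficulty is that a global line subbundle of $E_i$ need not exist. So first I would work locally: pick a finite affine open cover $\{V_\alpha\}$ of $X$ trivializing every $E_i$, so that over $V_\alpha$ the spray $(E_i,p_i,s_i)$ becomes a morphism $V_\alpha\times\AA^r\to X$ restricting to the projection on the zero section. For a fixed coordinate direction $\ell\in\{1,\dots,r\}$, the restriction to $V_\alpha\times(\AA^1\text{-th axis})$ is a \emph{local} rank $1$ spray on $V_\alpha$ with values in $X$; and since the original family is dominating at each $x\in X$, for each $x$ some finite collection of these local rank $1$ sprays (as $\alpha,i,\ell$ vary, with $x\in V_\alpha$) is already dominating at $x$.

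Next I would globalize each such local rank $1$ spray using the Extension Lemma \ref{lem:ext}: applied with $U=V_\alpha$ (which we may shrink to the complement of the support of a reduced effective divisor, as in the proof of that lemma), it produces a genuine rank $1$ spray $(\tilde L_{\alpha,i,\ell},\tilde q,\tilde s)$ on all of $X$ that is dominating at the given point $x$. Running this over a finite subcover and over all directions $\ell$ and all $i$ yields a finite family of globally defined rank $1$ sprays on $X$; by construction, at every point $x\in X$ the subfamily coming from a chart $V_\alpha\ni x$ together with enough directions recovers the full tangent space $T_xX$, so the whole family is dominating at $x$. This is exactly a dominating family of rank $1$ sprays on $X$, as required.

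The one technical point to be careful about — and the main (minor) obstacle — is uniformity: the Extension Lemma as stated produces, for each local rank $1$ spray and a chosen point, a global spray dominating \emph{at that point}, but dominance is an open condition on $X$, so each extended rank $1$ spray is in fact dominating on a dense open subset. I would argue that since $X$ is covered by the finitely many charts $V_\alpha$, and for each $x$ some finite set of the local rank $1$ sprays is dominating at $x$, quasi-compactness of $X$ lets us extract a \emph{single} finite family of global rank $1$ sprays whose union of ``good'' loci (where the relevant subfamily is dominating) covers $X$; concretely, the open locus where a fixed finite subfamily is dominating is open, these loci cover $X$, and finitely many suffice. Collecting the rank $1$ sprays appearing in those finitely many subfamilies gives the desired global dominating family of rank $1$ sprays, completing the proof.
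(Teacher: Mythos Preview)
Your proposal is correct and follows essentially the same route as the paper: trivialize the sprays $E_i$ over an open set, slice into coordinate line subbundles to get local rank~$1$ sprays, and then globalize each of these via the Extension Lemma~\ref{lem:ext}. The only remark is that your ``uniformity'' worry is overcautious: inspecting the proof of Lemma~\ref{lem:ext} (not just its statement), the extension $\tilde s=s'\circ\psi$ agrees with the original $s$ over $U$ up to a vector bundle isomorphism $\psi|_U$, so the extended rank~$1$ sprays are dominating at \emph{every} point of $V_\alpha$ where the original slices were, and the finite cover $\{V_\alpha\}$ already gives the global domination without an additional open-cover extraction.
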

\begin{proof}
Let $(E_i,p_i,s_i)$ be a dominating family of sprays over $X$. For a point $x\in X$ we can find 
a neighborhood $U$ such that $p_i|_U\colon E_i|_U\to U$ is a trivial bundle of rank, say $r_i$. 
Choose a decomposition $E_i|_U=\bigoplus_{j=1}^{r_i} L_{i,j}$ where 
$p_{i,j}=p_i|_{L_{i,j}}\colon L_{i,j}\to U$
are trivial line bundles. Letting $s_{i,j}=s|_{L_{i,j}}\colon L_{i,j}\to X$ yields a family of local rank 1 
sprays $(L_{i,j}, p_{i,j}, s_{i,j})_{j=1,\ldots,r_i}$ on $U$. It is easily seen that this family is dominating at $x$. 
Shrinking $U$ and proceeding in the same way as in the proof of 
Lemma \ref{lem:ext} we extend the latter family to a family of rank 1 sprays 
$(\tilde L_{i,j}, \tilde p_{i,j}, \tilde s_{i,j})_{j=1,\ldots,r_i}$ on the whole
$X$ which is still dominating at $x$. In this way we can construct a  family $(\tilde L_{i,j}, \tilde p_{i,j}, \tilde s_{i,j})_{i,j}$ 
which is dominating at each point of $X$. 
\end{proof}
\subsection{Proof of the main theorem}
In view of Corollary \ref{cor:loc-ell-subell} the following proposition ends 
the proof of Theorem \ref{mthm} from the Introduction. 
\begin{prop}\label{el.t1} 
Let $X$ be a smooth algebraic variety. If $X$ is subelliptic, then it is elliptic.
\end{prop}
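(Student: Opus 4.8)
The plan is to show that a dominating family of sprays on $X$ can be replaced by a single dominating spray. By Lemma \ref{lem:rank-1} we may assume $X$ carries a dominating family of rank $1$ sprays $(L_i,p_i,s_i)$, $i=1,\ldots,m$. The natural idea is to form the iterated composition $(E,p,s)=(L_1*\cdots*L_m,\,p_1*\cdots*p_m,\,s_1*\cdots*s_m)$. By Corollary \ref{cor:comp-1-sprays} this is again a spray of rank $m$ on $X$, and moreover if the family $(L_i,p_i,s_i)$ is dominating then so is the composed spray $(E,p,s)$. Hence $X$ is elliptic, which is exactly the assertion of the proposition.

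Let me spell out the one point that needs care: Corollary \ref{cor:comp-1-sprays}, and hence Proposition \ref{el.p1}, is stated for the composition of \emph{two} sprays, and the iterated case is obtained by induction. The inductive step requires that at each stage the partial composition $E_1*\cdots*E_k$ is still a \emph{vector bundle} over $X$, so that Proposition \ref{el.p1} applies with $E_1:=E_1*\cdots*E_k$ (a vector bundle of rank $k$, by the inductive hypothesis) and $E_2:=L_{k+1}$ (rank $1$). Proposition \ref{el.p1} does not need $E_1$ to be a trivial bundle, only that $\Pic(X)\cong\Pic(E_1)$ via pullback; this holds here because $E_1*\cdots*E_k$ is the total space of a vector bundle over $X$ (one may invoke \cite[Theorem 5]{Mag75} again, the homotopy invariance of the Picard group along a vector bundle projection). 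So the induction goes through and $(E,p,s)$ is a genuine spray, i.e. $p\colon E\to X$ carries a vector bundle structure.

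For the dominating property, the second assertion of Corollary \ref{cor:comp-1-sprays} (citing \cite[Lemma 6.3.6]{For17}) already records that the composition of a dominating family of sprays is a dominating spray: concretely, at a point $x\in X$ the differential of $s=s_1*\cdots*s_m$ restricted to the fiber $E_x$ surjects onto $\sum_i \mathrm{d}s_i(T_{0_{i,x}}(L_i)_x)=T_xX$, the last equality being the hypothesis that the family is dominating at $x$. Since $x$ was arbitrary, $(E,p,s)$ is dominating at every point, so $X$ is elliptic.

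The only real obstacle is the bookkeeping in the induction — making sure the intermediate composition is literally a vector bundle so that Proposition \ref{el.p1} can be reapplied — but this is handled by the same Picard-group argument used in the proof of Proposition \ref{el.p1}, now applied to the vector bundle $E_1*\cdots*E_k\to X$ in place of $E_1\to X$. Everything else is a direct appeal to Corollary \ref{cor:comp-1-sprays}.
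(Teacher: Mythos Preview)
Your proof is correct and follows exactly the paper's approach: invoke Lemma~\ref{lem:rank-1} to obtain a dominating family of rank~$1$ sprays, then apply Corollary~\ref{cor:comp-1-sprays} to conclude that their composition is a single dominating spray. The extra paragraph you devote to the inductive step---checking that each partial composition $L_1*\cdots*L_k$ is again a vector bundle so that Proposition~\ref{el.p1} can be reapplied---is a legitimate point, but in the paper this is already absorbed into the statement of Corollary~\ref{cor:comp-1-sprays}, whose first assertion covers the iterated case directly.
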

\begin{proof}  By Corollary \ref{cor:comp-1-sprays} it suffices to construct
a dominating family of  rank $1$ sprays on $X$. However, Lemma \ref{lem:rank-1} 
provides such a family. 
\end{proof}
\renewcommand{\MR}[1]{}
\bigskip

\end{document}